\newtheorem{theorem}{\bf Theorem}[section]
\newtheorem{lemma}[theorem]{\bf Lemma}
\newtheorem{proposition}[theorem]{\bf Proposition}
\newcommand{\qed}{\hfill $\square$ \bigskip}
\newcommand{\Fib}{{\cal F}}
\newcommand{\Luc}{{\cal L}}
\begin{document}
\title{The (non-)existence of perfect codes in Lucas cubes}

\author{
Michel Mollard\footnote{Institut Fourier, CNRS, Universit\'e Grenoble Alpes, France email: michel.mollard@univ-grenoble-alpes.fr}
}
\date{\today}
\maketitle

\begin{abstract}
\noindent The {\em Fibonacci cube} of dimension $n$, denoted as $\Gamma_n$,  is the subgraph of the $n$-cube $Q_n$ induced by vertices with no consecutive 1's. Ashrafi and his co-authors proved the non-existence of perfect codes in  $\Gamma_n$ for $n\geq 4$. As an open problem the authors suggest to consider the existence of perfect codes in generalizations of Fibonacci cubes. The most direct generalization is the family $\Gamma_n(1^s)$ of subgraphs induced by strings without $1^s$ as a substring where $s\geq 2$ is a given integer. In a precedent work we proved the existence of a perfect code in $\Gamma_n(1^s)$ for $n=2^p-1$ and $s \geq 3.2^{p-2}$ for any integer $p\geq 2$.\\
The Lucas cube $\Lambda_n$ is obtained from $\Gamma_n$ by removing vertices that start and end with 1. Very often the same problems are studied on Fibonacci cubes and Lucas cube. In this note we prove the non-existence of perfect codes in $\Lambda_n$ for $n\geq 4$ and prove the existence of perfect codes in some generalized Lucas cube $\Lambda_n(1^s)$.
\end{abstract}

\noindent
{\bf Keywords:} Error correcting codes, perfect code, Fibonacci cube. 

\noindent
{\bf AMS Subj. Class. }: 94B5,0C69

\section{Introduction and notations}

An interconnection topology can be represented by a graph $G=(V,E)$, where $V$ denotes the processors and $E$ the communication links.
The hypercube $Q_n$ is a popular interconnection network because of its structural properties.\\
\indent The Fibonacci cube was introduced in \cite{hsu93} as a new interconnection network.
This graph is an isometric subgraph of the hypercube which is inspired in the Fibonacci numbers. It has attractive recurrent structures such as
its decomposition into two subgraphs which are also Fibonacci cubes by themselves.  Structural properties of these graphs were more extensively 
studied afterwards. See \cite{survey} for a survey. \\
\indent Lucas cubes, introduced in \cite{mupe2001}, have attracted the attention as well due to the fact that these cubes are 
closely related to the Fibonacci cubes. They have also been widely studied
\cite{dedo2002, ra2013, ca2011, klmope2011, camo2012, km2012}. \\

\indent We will next define some concepts needed in this paper. 
Let $G$ be a connected graph. The \emph{open neighbourhood} of a vertex $u$ is $N_G(u)$ the set of vertices adjacent to $u$. The \emph{closed neighbourhood} of $u$ is $N_G[u]=N_G(u)\cup\{u\}$. The \emph{distance} between two vertices noted $d_G(x,y)$ is the length of a shortest path between $x$ and $y$. We have thus $N_G[u]=\{v \in V(G);d_G(u,v)\leq1\}$. We will use the notations $d(x,y)$ and $N[u]$ when the graph is unambiguous.\\
A \emph{dominating set} $D$ of $G$ is a set of vertices such that every vertex of $G$ belongs to the closed neighbourhood of at least one vertex of $D$. 
In \cite{Biggs}, Biggs initiated the study of perfect codes in graphs a generalization of classical 1-error perfect correcting codes. A \emph{code} $C$ in $G$ is a set of vertices $C$ such that for every pair of distinct vertices $c,c'$ of $C$ we have $N_G[c]\cap N_G[c']=\emptyset$ or equivalently such that $d_G(c,c')\geq3$.

A \emph{perfect code} of a graph $G$ is both a dominating set and a code. It is thus a set of vertices $C$  such that every vertex of $G$ belongs to the closed neighbourhood of exactly one vertex of $C$. A perfect code is also known as an efficient dominating set. The existence or non-existence of perfect codes have been considered for many graphs. See the introduction of \cite{aabfk2016} for some references.

The vertex set of the \emph{$n$-cube} $Q_n$ is the set $\mathbb{B}_n$ of  binary strings of length $n$, two vertices being adjacent if they differ in precisely one position. Classical 1-error correcting codes and perfect codes are codes and perfect codes in the graph $Q_n$. The \emph{weight} of a binary string is the number of 1's.
The concatenation of strings $\bm{x}$ and $\bm{y}$ is denoted $\bm{x}||\bm{y}$ or just $\bm{x}\bm{y}$ when there is no ambiguity. A string $\bm{f}$ is a \emph{substring} of a string $\bm{s}$ if there exist strings $\bm{x}$ and $\bm{y}$, may be empty, such that $\bm{s}=\bm{x}\bm{f}\bm{y}$.

A {\em Fibonacci string} of length $n$ is a binary string $\bm{b}=b_1\ldots b_n$ with $b_i\cdot b_{i+1}=0$ for $1\leq i<n$. In other words a Fibonacci string is a binary string without $11$ as substring.
The {\em Fibonacci cube} $\Gamma_n$ ($n\geq 1$) is the subgraph of $Q_n$ induced by the Fibonacci strings of length $n$.
Adjacent vertices in $\Gamma_n$ differ in one bit. Because of the empty string, $\Gamma_0 = K_1$. 

A Fibonacci string  of length $n$ is a \textit{Lucas string} if $b_1 \, \cdotp b_n \neq 1$. 
That is, a Lucas string has no two consecutive 1's including the first and the last elements of the string. 
The \textit{Lucas cube} $\Lambda_n$ is the subgraph of $Q_n$ induced by the Lucas strings of length $n$. 
We have $\Lambda_0 = \Lambda_1 = K_1$.\\
Let $\Fib_n$ and $\Luc_n$ be the set of strings of Fibonacci strings and Lucas strings of length $n$.\\
By  $\Gamma_{n,k}$ and $\Lambda_{n,k}$ we denote the vertices of of weight $k$ in respectively $\Gamma_n$ and $\Lambda_n$\\

Since $$\Luc_n=\{0\bm{s}; \bm{s}\in \Fib_{n-1}\}\cup\{10\bm{s}0; \bm{s}\in \Fib_{n-3}\} $$ and $$|\Gamma_{n,k}|=\binom{n-k+1}{k}$$
it is immediate  to derive  the following classical result.
\begin{proposition}\label{orderrr}
Let $n \ge 1$. The number of vertices of weight $k \leq n$ in $\Lambda_n$ is  
$$|\Lambda_{n,k}|= \binom{n-k}{k}+ \binom{n-k-1}{k-1}.$$
\end{proposition}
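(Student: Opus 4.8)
The plan is to read off the answer directly from the two ingredients quoted just before the statement: the set partition $\Luc_n=\{0\bm{s}; \bm{s}\in \Fib_{n-1}\}\cup\{10\bm{s}0; \bm{s}\in \Fib_{n-3}\}$ and the count $|\Gamma_{n,k}|=\binom{n-k+1}{k}$. First I would note that the union above is disjoint, since a string in the first set begins with $0$ while a string in the second set begins with $1$; hence $|\Luc_{n,k}|$ is the sum of the number of weight-$k$ strings contributed by each part.

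Next I would count each part. A string $0\bm{s}$ with $\bm{s}\in\Fib_{n-1}$ has weight $k$ precisely when $\bm{s}$ has weight $k$, so the first part contributes $|\Gamma_{n-1,k}|=\binom{(n-1)-k+1}{k}=\binom{n-k}{k}$ vertices. A string $10\bm{s}0$ with $\bm{s}\in\Fib_{n-3}$ has weight $k$ precisely when $\bm{s}$ has weight $k-1$, so the second part contributes $|\Gamma_{n-3,k-1}|=\binom{(n-3)-(k-1)+1}{k-1}=\binom{n-k-1}{k-1}$ vertices. Adding the two gives $|\Lambda_{n,k}|=\binom{n-k}{k}+\binom{n-k-1}{k-1}$, as claimed.

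The only real care needed is with boundary behaviour, so I would check the degenerate ranges separately: the cases $n\le 3$ (where $\Fib_{n-1}$ or $\Fib_{n-3}$ involves the empty string or is vacuous), the case $k=0$ (where the second term $\binom{n-1}{-1}$ must be read as $0$, consistent with the fact that no Lucas string of weight $0$ starts with $1$), and large $k$ where both binomials vanish in accordance with $\Lambda_{n,k}=\emptyset$. With the standard convention $\binom{a}{b}=0$ for $b<0$ or $b>a$, one verifies that the formula for $|\Gamma_{n,k}|$ and the decomposition both remain valid at these extremes, so the identity holds for all $n\ge 1$ and all $k$. I do not expect any genuine obstacle here; the proposition is essentially a bookkeeping consequence of the structural decomposition of $\Luc_n$, and the only thing to be vigilant about is the interpretation of out-of-range binomial coefficients in the small cases.
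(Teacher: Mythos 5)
Your argument is correct and is exactly the computation the paper intends: apply the stated decomposition $\Luc_n=\{0\bm{s};\bm{s}\in\Fib_{n-1}\}\cup\{10\bm{s}0;\bm{s}\in\Fib_{n-3}\}$ together with $|\Gamma_{n,k}|=\binom{n-k+1}{k}$, counting weight-$k$ strings in each (disjoint) part. Your extra care with the degenerate cases and out-of-range binomial coefficients is a sensible addition but does not change the route.
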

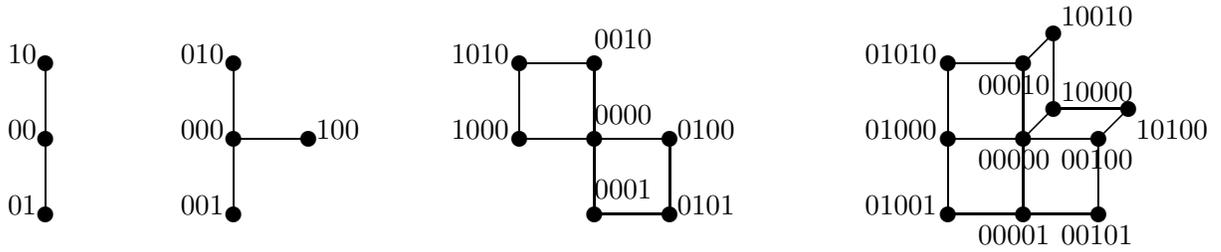
\begin{figure}
  \centering
\setlength{\unitlength}{1 mm}
\begin{picture}(160, 30)

\newsavebox{\gtwo}
\savebox{\gtwo}
  (30,30)[bl]
  {
  
\put(10,10){\circle*{2}}
\put(10,20){\circle*{2}}
\put(10,30){\circle*{2}}

\put(10,10){\line(0,1){10}}
\put(10,20){\line(0,1){10}}

\put(5,10){$01$}
\put(5,20){$00$}
\put(5,30){$10$}

  }

 \newsavebox{\lthree}
\savebox{\lthree}
  (40,30)[bl]
  {
  
\put(10,10){\circle*{2}}
\put(10,20){\circle*{2}}
\put(10,30){\circle*{2}}

\put(20,20){\circle*{2}}

\put(10,20){\line(1,0){10}}
\put(10,10){\line(0,1){10}}
\put(10,20){\line(0,1){10}} 

\put(3,10){$001$}
\put(3,20){$000$}
\put(3,30){$010$}
\put(21,20){$100$}

 }
 
\newsavebox{\lfour}
\savebox{\lfour}
  (40,30)[bl]
  {

\put(10,20){\circle*{2}}
\put(10,30){\circle*{2}}
\put(20,10){\circle*{2}}
\put(20,20){\circle*{2}}
\put(20,30){\circle*{2}}
\put(30,10){\circle*{2}}
\put(30,20){\circle*{2}}

\put(10,20){\line(1,0){10}}
\put(10,30){\line(1,0){10}}

\put(10,20){\line(0,1){10}}

\put(20,10){\line(1,0){10}}
\put(20,20){\line(1,0){10}}

\put(20,10){\line(0,1){10}}
\put(20,20){\line(0,1){10}}
\put(30,10){\line(0,1){10}} 

\put(20,12){$0001$}
\put(20,22){$0000$}
\put(20,32){$0010$}
\put(31,20){$0100$}
\put(1,20){$1000$}
\put(1,30){$1010$}
\put(31,10){$0101$}
 }
  \newsavebox{\gggfour}
\savebox{\gggfour}
  (40,40)[bl]
  {
 
\put(10,10){\circle*{2}}
\put(10,20){\circle*{2}}
\put(10,30){\circle*{2}}
\put(20,10){\circle*{2}}
\put(20,20){\circle*{2}}
\put(20,30){\circle*{2}}
\put(30,10){\circle*{2}}
\put(30,20){\circle*{2}}

\put(10,10){\line(1,0){10}}
\put(20,10){\line(1,0){10}}
\put(10,20){\line(1,0){10}}
\put(20,20){\line(1,0){10}}
\put(10,30){\line(1,0){10}}
\put(10,10){\line(0,1){10}}
\put(10,20){\line(0,1){10}}
\put(20,10){\line(0,1){10}}
\put(20,20){\line(0,1){10}}
\put(30,10){\line(0,1){10}}
  }
\newsavebox{\lfive}
\savebox{\lfive}
  (40,40)[bl]
  {
  
\put(0,0){\usebox{\gggfour}}

\put(24,24){\circle*{2}}
\put(24,34){\circle*{2}}
\put(34,24){\circle*{2}}
\put(24,24){\line(1,0){10}}
\put(24,24){\line(0,1){10}}

\put(20,20){\line(1,1){4}}
\put(20,30){\line(1,1){4}}
\put(30,20){\line(1,1){4}} 
\put(14,6){$00001$}
\put(14,16){$00000$}
\put(14,26){$00010$}
\put(25,16){$00100$}
\put(-1,10){$01001$}
\put(25,25){$10000$}
\put(-1,20){$01000$}
\put(-1,30){$01010$}
\put(25,06){$00101$}
\put(35,20){$10100$}
\put(25,35){$10010$}

 }
  
\put(0,-5){\usebox{\gtwo}}  
\put(25,-5){\usebox{\lthree}}  
\put(63,-5){\usebox{\lfour}}
\put(120,-5){\usebox{\lfive}}
\end{picture}
\caption{$\Gamma_2=\Lambda_2$,  $\Lambda_3$, $\Lambda_4$ and $\Lambda_5$}\label{F1}

\end{figure}

It will be convenient to consider the binary strings of length $n$ as vectors   of $\mathbb{F}^n$ the vector space of dimension $n$ over the field $\mathbb{F}=\mathbb{Z}_2$ thus to associate to a string $x_1 x_2 \dots x_n$ the vector $\theta(x_1 x_2 \dots x_n)=(x_1,x_2,\ldots,x_n)$.
The \emph{Hamming distance} between two vectors $\bm{x},\bm{y} \in \mathbb{F}^n$, $d(\bm{x},\bm{y})$ is the number of coordinates in which they differ.
By the correspondence $\theta$ we can define the binary sum $\bm{x}+\bm{y}$ and the Hamming distance $d(\bm{x},\bm{y})$ of  strings in $\mathbb{B}_n$. Note that the Hamming distance is the usual graph distance in $Q_n$.

We will first recall some basic results about perfect codes in $Q_n$.
Since $Q_n$ is a regular graph of degree $n$ the existence of a perfect code of cardinality $|C|$ implies $|C|(n+1)=2^n$ thus a necessary condition of existence is that $n+1$ is a power of 2 thus that $n=2^p-1$ for some integer $p$.

For any integer $p$ Hamming \cite{Ha1950} constructed, a linear subspace of $\mathbb{F}^{2^p-1}$ which is a perfect code. It is easy to prove that all linear perfect codes are Hamming codes. Notice that $1^n$ belongs to the Hamming code of length $n$. \\
In 1961  Vasilev \cite{{Va1962}}, and later many authors, see \cite{Co1,Sol2008} for a survey, constructed perfect codes which are not linear codes.\\

In a recent work  \cite{aabfk2016} Ashrafi and his co-authors proved the non-existence of perfect codes in  $\Gamma_n$ for $n\geq 4$. As an open problem the authors suggest to consider the existence of perfect codes in generalizations of Fibonacci cubes.
The most complete generalization  proposed in \cite{ikr} is, for a given string $\bm{f}$, to consider $\Gamma_n(\bm{f})$ the subgraph of $Q_n$ induced by strings that do not contain $\bm{f}$ as substring. Since Fibonacci cubes are $\Gamma_n(11)$ the most immediate generalization \cite{hsuliu,Zag} is to consider  $\Gamma_n({1^s})$ for a given integer $s$. In \cite {mo2018} we proved the existence of a perfect code in $\Gamma_n(1^s)$ for $n=2^p-1$ and $s \geq 3.2^{p-2}$ for any integer $p\geq 2$.\\

In the next section we will prove the main result of this note.

\begin{theorem}\label{thmain}
The Lucas cube $\Lambda_n , n\geq 0$, admits a perfect code if and only if $n\leq 3$ .
 \end{theorem}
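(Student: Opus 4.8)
The plan is to treat the two directions separately. For $n\le 3$ I would exhibit a perfect code directly: if $n\in\{0,1\}$ then $\Lambda_n=K_1$ and its unique vertex is a perfect code; $\Lambda_2$ is a path of order $3$ whose middle vertex is $0^2$, so $\{0^2\}$ works; and $\Lambda_3$ is the star with centre $0^3$, so $\{0^3\}$ works. For the converse I would assume $C$ is a perfect code of $\Lambda_n$ with $n\ge 4$ and derive a contradiction. The starting point is the all-zero vertex: its closed neighbourhood in $\Lambda_n$ is exactly $\{0^n\}\cup\{\bm{e}_1,\dots,\bm{e}_n\}$ (flipping one coordinate of $0^n$ from $0$ to $1$ never creates two consecutive $1$'s, even cyclically), so exactly one of these $n+1$ vertices lies in $C$. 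Since a Lucas string is defined by a cyclic condition, $\Lambda_n$ is invariant under cyclic shifts of the coordinates, so I may assume this codeword is $0^n$ or $\bm{e}_1$. Throughout I would count vertices of each weight by means of Proposition~\ref{orderrr} and the companion identity $|\Gamma_{n,k}|=\binom{n-k+1}{k}$.

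\emph{Case $0^n\in C$.} I would first observe that $C$ has no codeword of weight $1$ or $2$, because a codeword $\bm{e}_i$ or $\bm{e}_i+\bm{e}_j$ would place $\bm{e}_i$ in the closed neighbourhood of two codewords. Hence every weight-$2$ vertex is dominated by a weight-$3$ codeword; a weight-$3$ codeword $\bm{e}_i+\bm{e}_j+\bm{e}_k$ dominates, among vertices of weight at most $2$, exactly its three weight-$2$ ``subvertices'' (each again a Lucas string), and those are dominated by it alone. So the weight-$2$ vertices split into such triples, and the weight-$2$ vertices containing a fixed coordinate split into pairs. There are $n-3$ of the latter, so $n$ is odd; and $3$ divides $|\Lambda_{n,2}|=\tfrac{n(n-3)}{2}$, so $3$ divides $n$; hence $n\equiv 3\pmod 6$. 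For such $n$ I would go one weight up: as there are no codewords of weight at most $2$, every weight-$3$ vertex is a codeword or is dominated by a weight-$4$ codeword, and the latter dominate four weight-$3$ vertices each, so $4$ divides $|\Lambda_{n,3}|-\tfrac13|\Lambda_{n,2}|$. But writing $n=6m+3$ and using Proposition~\ref{orderrr}, this number equals $(2m+1)(18m^2-12m+1)$, which is odd --- a contradiction.

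\emph{Case $\bm{e}_1\in C$.} Now $0^n$ is dominated by $\bm{e}_1$, so $0^n\notin C$; no $\bm{e}_i$ with $i\ne 1$ is a codeword, since $d(\bm{e}_1,\bm{e}_i)=2$; and no weight-$2$ codeword involves the first coordinate, since $d(\bm{e}_1,\bm{e}_1+\bm{e}_j)=1$. The $n-1$ weight-$1$ vertices $\bm{e}_2,\dots,\bm{e}_n$ are not dominated by $\bm{e}_1$, so each is dominated by a weight-$2$ codeword $\bm{e}_i+\bm{e}_j$ with $i,j\ge 2$, and such a codeword dominates exactly the two weight-$1$ vertices $\bm{e}_i,\bm{e}_j$. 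Thus $\{\bm{e}_2,\dots,\bm{e}_n\}$ splits into pairs, which forces $n$ odd and pins the number of weight-$2$ codewords to $\tfrac{n-1}{2}$. Next I would count the weight-$2$ Lucas strings having a $0$ in the first coordinate: these constitute the weight-$2$ level of a copy of $\Gamma_{n-1}$, hence there are $\binom{n-2}{2}=\tfrac{(n-2)(n-3)}{2}$ of them, of which $\tfrac{n-1}{2}$ are codewords and each of the remaining $\tfrac{n^2-6n+7}{2}$ is dominated by a weight-$3$ codeword that also avoids the first coordinate (a weight-$3$ codeword through coordinate $1$ would be at distance $2$ from $\bm{e}_1$), three at a time. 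So $3$ divides $\tfrac{n^2-6n+7}{2}$; but $n^2-6n+7\equiv n^2+1\pmod 3$ is never $0$ --- a contradiction. (For $n=4$ the argument already stops at ``$n$ odd'', since then $n-1=3$.)

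The routine parts are the neighbourhood computations and the divisibility arithmetic fed by Proposition~\ref{orderrr}. The delicate point is the second case: in contrast with the first, weight-$2$ codewords genuinely occur, so one cannot immediately push the counting up to weight $3$, and the pairing of the weight-$1$ vertices $\bm{e}_2,\dots,\bm{e}_n$ by weight-$2$ codewords is the crux --- together with the need to certify that the weight-$3$ codewords entering the final count avoid the first coordinate. I expect the careful justification of these ``no low-weight codeword of the wrong shape'' statements, and the check that the small values $n=4,5$ are consistent with the parity conclusions, to be where the work lies.
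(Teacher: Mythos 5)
Your proposal is correct and follows essentially the same route as the paper: the same dichotomy on which codeword dominates $0^n$ (using circular symmetry), the same level-by-level domination counts at weights $1$--$4$, and the same divisibility contradictions ($3\nmid\frac{n^2-6n+7}{2}$ in one case, $n\equiv 3\pmod 6$ and then $4$ dividing the odd number $(2m+1)(18m^2-12m+1)$ in the other). The only cosmetic differences are that you run the counting down to $n=4,5$ (which the paper instead disposes of by inspection of Figure~\ref{F1}) and, in the $0^n\notin C$ case, you count inside the first-coordinate-$0$ copy of $\Gamma_{n-1}$ rather than subtracting the $n-3$ neighbours of $10^{n-1}$, arriving at the same quantity.
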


\section{Perfect codes in Lucas cube}

It can be easily checked by hand that  $\{0^n\}$ is a perfect code of $\Lambda_n$ for $n\leq 3$ and that $\Lambda_4$ or $\Lambda_5$ does not contain a perfect code (Figure~\ref{F1}).\\
 Assume thus $n\geq 6$.\\ 

Note first that from Proposition \ref{orderrr} we have $$|\Lambda_{n,2}|=\frac{n(n-3)}{2} \text{ and }|\Lambda_{n,3}|=\frac{n(n-4)(n-5)}{6}$$.\\
Therefore $\Lambda_{n,2}$ and $\Lambda_{n,3}$ are none empty.\\
Let  $\Lambda_{n,k}^1$ be the vertices of $\Lambda_{n,k}$ that start with $1$. Since  $\Luc_n=\{0\bm{s}; \bm{s}\in \Fib_{n-1}\}\cup\{10\bm{s}0; \bm{s}\in \Fib_{n-3}\} $ the number of vertices in $\Lambda_{n,k}^1$ is $$|\Lambda_{n,k}^1|=|\Gamma_{n-3,k-1}|=\binom{n-1-k}{k-1}$$.

\begin{lemma}
If $n\geq 6$ and $C$ is a perfect code of $\Lambda_n$ then $0^n\in C$.
\end{lemma}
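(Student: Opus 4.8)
The plan is to argue by contradiction: assume $n\ge 6$ and that $C$ is a perfect code of $\Lambda_n$ with $0^n\notin C$. For $1\le i\le n$ write $\bm e_i$ for the weight-one Lucas string whose single $1$ lies in position $i$. Since $N[0^n]=\{0^n\}\cup\{\bm e_1,\dots,\bm e_n\}$ and $0^n$ lies in the closed neighbourhood of a unique codeword, there is exactly one $i$ with $\bm e_i\in C$. The first step is to record the ``no codeword too close to $\bm e_i$'' facts: (a) no other weight-one string is a codeword, being at distance $2$ from $\bm e_i$; (b) no weight-two Lucas string containing position $i$ is a codeword, being at distance $1$ from $\bm e_i$; and (c) no weight-three Lucas string $\bm w$ containing position $i$ is a codeword — writing $\bm w=\bm e_i+\bm e_b+\bm e_c$, the string $\bm e_i+\bm e_b$ is a weight-two Lucas string adjacent both to $\bm e_i\in C$ and to $\bm w\in C$, so it would be dominated twice.

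Next I read off the weight-one level. Fix $j\ne i$: the vertex $\bm e_j$ must be dominated, its only $\Lambda_n$-neighbours are $0^n\notin C$ and weight-two strings, and it is not itself a codeword by (a); hence it is dominated by a weight-two codeword $\bm e_j+\bm e_k$, and $k\ne i$ by (b). Such a codeword is adjacent to exactly the weight-one vertices $\bm e_j$ and $\bm e_k$, so, by uniqueness of domination, the weight-two codewords are pairwise ``disjoint'' and the pairs $\{j,k\}$ they determine partition $\{1,\dots,n\}\setminus\{i\}$. In particular $n-1$ is even, which already disposes of the case of even $n$; and for odd $n$ there are exactly $(n-1)/2$ weight-two codewords, none using position $i$.

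Now suppose $n$ is odd and count at the weight-two level. A weight-two Lucas string contains position $i$ in exactly $n-3$ ways, so by Proposition~\ref{orderrr} the number of weight-two Lucas strings not using position $i$ is $|\Lambda_{n,2}|-(n-3)=\tfrac{(n-3)(n-2)}{2}$. Each such string is either one of the $(n-1)/2$ weight-two codewords above, or — its two weight-one neighbours not being codewords, by (a) — is dominated by a weight-three codeword; and by (c) every weight-three codeword avoids position $i$, is adjacent to exactly three weight-two Lucas strings (all of them of this kind), with distinct weight-three codewords giving disjoint triples. Hence $3$ divides $\tfrac{(n-3)(n-2)}{2}-\tfrac{n-1}{2}=\tfrac{n^2-6n+7}{2}$. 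For odd $n$ the integer $n^2-6n+7$ is even, so this forces $3\mid n^2-6n+7$, i.e. $3\mid n^2+1$; but $n^2\equiv 0$ or $1\pmod 3$ — a contradiction. Therefore $0^n\in C$.

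I expect the only steps that need real care to be (c), the exclusion of weight-three codewords through position $i$ via the forced double domination of a weight-two string, and the exact-covering bookkeeping of the final paragraph — that every weight-two Lucas string avoiding position $i$ is counted exactly once, either as a weight-two codeword or as one of the three weight-two neighbours of a unique weight-three codeword. Everything else is routine sphere counting, after which the divisibility obstruction is immediate.
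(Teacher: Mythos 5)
Your proof is correct and follows essentially the same route as the paper: a unique weight-one codeword, a parity argument forcing $n$ odd with exactly $\tfrac{n-1}{2}$ weight-two codewords, and a count showing the remaining weight-two vertices must be covered in disjoint triples by weight-three codewords, giving the divisibility contradiction. The only differences are cosmetic: you spell out explicitly the exclusion of weight-three codewords through position $i$ (which the paper uses implicitly) and close with a mod-$3$ argument ($3\mid n^2+1$) where the paper phrases the same obstruction mod $6$.
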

\begin{proof}

Suppose on the contrary that $0^n\notin C$. Since $0^n$ must be dominated there exists a vertex in $\Lambda_{n,1}\cap C$. This vertex is unique and because of the circular symmetry of $\Lambda_n$ we can assume $10^{n-1}\in C$. 

Since $0^n\notin C$ the other vertices of $\Lambda_{n,1}$ must be dominated by vertices in  $\Lambda_{n,2}$. But a vertex in  $\Lambda_{n,2}$ has precisely two neighbors in $\Lambda_{n,1}$ thus $n$ must be odd and $$|\Lambda_{n,2}\cap C|=\frac{n-1}{2}.$$\\
The unique vertex $10^{n-1}$ in $\Lambda_{n,1}\cap C$ has exactly  $n-3$ neighbors in  $\Lambda_{n,2}$. Let $D$ be the vertices of $\Lambda_{n,2}$ not in $C$ and not dominated by $10^{n-1}$. Vertices in  $D$ must be dominated by vertices in $\Lambda_{n,3}\cap C$. Each vertex of $\Lambda_{n,3}\cap C$ has exactly  exacty three neighbors  in  $\Lambda_{n,2}$. 
Thus $3$ divides the number of vertices in $D$. This number is  $$|D|=|\Lambda_{n,2}|-(n-3)-\frac{n-1}{2}=\frac{n^2-6n+7}{2}.$$
 This is not possible since there exists no odd integer $n$ such that $6$ divides $n^2+1$. Indeed since\textit{} $n$ is odd,  $6$ does not divide $n$ thus divides $(n+1)(n-1)= n^2-1$ or $(n+2)(n-2)=n^2-4$ or $(n+3)(n-3)=n^2-9$ thus cannot divide $n^2+1$.
\end{proof}
\qed  \\
Let $n\geq 6$ and $C$  be a perfect code. Since  $0^n\in C$   all vertices of  $\Lambda_{n,1}$ are dominated by  $0^n$ and thus $\Lambda_{n,2}\cap C=\Lambda_{n,1}\cap C=\emptyset$.
Consequently, each vertex of $\Lambda_{n,2}$ must be  dominated by a vertex in $\Lambda_{n,3}$.
Since each vertex in $\Lambda_{n,3}$ has precisely three neighbors in  $\Lambda_{n,2}$ we obtain that $$|\Lambda_{n,3}\cap C|=\frac{|\Lambda_{n,2}|}{3}.$$
This number must be an integer thus $3$ divides $|\Lambda_{n,2}|=\frac{n(n-3)}{2}$ and therefore $3$ divides $n(n-3)$. This is only possible if $n$ is a multiple of $3$.\\
Each vertex of $\Lambda_{n,2}^1$ must be  dominated by a vertex in $\Lambda_{n,3}^1$. Furthermore a vertex in  $\Lambda_{n,3}^1$  has precisely two neighbors in  $\Lambda_{n,2}^1$.  Therefore $|\Lambda_{n,2}^1|= n-3$ must be even and thus $n=6p+3$ for some integer $p\geq1$.\\
Let $E$ be  the  set of vertices of  $\Lambda_{n,3}$ not in $C$. Vertices in $E$ must be dominated by a vertex in $\Lambda_{n,4}$. Furthermore each vertex in $\Lambda_{n,4}$ has precisely four  neighbors in  $\Lambda_{n,3}$.\\ Therefore $4$ divides $|E|$ with $$|E|=|\Lambda_{n,3}|-|\Lambda_{n,3}\cap C|= \frac{n(n-4)(n-5)}{6}-\frac{n(n-3)}{6}=\frac{n(n^2-10n+23)}{6}. $$
Replacing $n$ by $6p+3$ we obtain that $4$ divides the odd number $(2p+1)(18p^2 -12p +1)$. This contradiction prove the Theorem.
\qed
 
\section{Perfect codes in generalized Lucas cube}
The analogous of the generalisation of Fibonacci cube $\Gamma_n(1^s)$ for Lucas cube  is the family $\Lambda_n(1^s)$ of subgraphs of $Q_n$ induced by strings without $1^s$ as a substring in a circular manner where $s\geq 2$ is a given integer.
More formally \cite{ikr2012} for any binary strings $b_1b_2\dots b_n$ and each $1 \leq i \leq n $, call $b_i b_{i+1}\dots b_n b_1 \dots b_{i-1}$ the $i$\emph{-th circulation} of $b_1b_2\dots b_n$. The \emph{generalized Lucas cube }$\Lambda_n(1^s)$ is the subgraph of $Q_n$ induced by strings without a circulation containing $1^s$ as a substring.\\
In \cite{mo2018} the existence of a perfect code in $\Gamma_n(1^s)$ is proved for $n=2^p-1$ and $s \geq 3.2^{p-2}$ for any integer $p\geq 2$.\\
The strategy used in this construction is to build a perfect code $C$ in $Q_n$ such that no vertex of $C$ contains $1^s$ as substring. The set $C$ is also a perfect code in $\Gamma_n(1^s)$ since each vertex of $\Gamma_n(1^s)$ belongs to the unique closed neighbourhood in $Q_n$ thus in $\Gamma_n(1^s)$ of a vertex in $C$. Because of the following proposition we cannot use the same idea  for $\Lambda_n(1^s)$ and $s\leq n-1$.
\begin{proposition}
Let $n$ an integer and $2\leq s\leq n-1$. There exist no perfect code $C$ in $Q_n$ such that the vertices of $C$ are without a circulation containing $1^s$ as a substring.
\end{proposition}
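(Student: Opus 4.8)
The plan is to exploit the single vertex $1^n$, whose domination in a perfect code is extremely constrained, and to argue by contradiction. Suppose $C$ is a perfect code of $Q_n$ all of whose vertices avoid $1^s$ in each of their circulations; I will produce a vertex of $C$ that violates this.

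First I would use that, since $C$ is a perfect code, the vertex $1^n$ lies in the closed neighbourhood of exactly one codeword $c\in C$, so $d(1^n,c)\le 1$. The Hamming distance $d(1^n,c)$ is precisely the number of $0$'s occurring in $c$, hence $c$ contains at most one $0$; that is, $c$ has weight $n$ or weight $n-1$.

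Then I would split into these two cases. If $c$ has weight $n$, then $c=1^n$, and since $s\le n$ the string $c$ itself already contains $1^s$ as a substring. If $c$ has weight $n-1$, let $i$ be the position of the unique $0$ of $c$; then the $i$-th circulation of $c$ is the string $0\,1^{n-1}$, which contains $1^{n-1}$, hence $1^s$ (as $s\le n-1$), as a substring. In both cases $c$ is a vertex of $C$ some circulation of which contains $1^s$, contradicting the hypothesis on $C$. (Nothing is lost by not invoking the necessary condition $n=2^p-1$: if no perfect code of $Q_n$ exists the statement is vacuous, and otherwise the argument above applies to every such code.)

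I do not expect a genuine obstacle here: the entire content is the observation that $1^n$ must be covered by a codeword of weight at least $n-1$, and that such a codeword necessarily displays a circular run of at least $n-1$ consecutive $1$'s. The only care needed is the elementary bookkeeping with the definition of the $i$-th circulation in the weight-$(n-1)$ case.
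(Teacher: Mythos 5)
Your proof is correct and follows essentially the same route as the paper: the paper also argues that $1^n$ cannot itself be a codeword (your weight-$n$ case) and must therefore be dominated by a neighbour $c=1^i01^{n-1-i}$, whose appropriate circulation is $1^{n-1}0$ and thus contains $1^s$. The only difference is purely presentational (your explicit case split on the weight of $c$).
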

\begin {proof}

Let $C$ be a such a perfect code in $Q_n$ then $1^n\notin C$. Thus $1^n$ must be neighbour of a vertex $c$ in $C$. Since $c=1^i01^{n-1-i}$ for some integer $i$ the ${i+1}$th-circulation of $c$ is $1^{n-1}0$. 
\end {proof}
We can complete this proposition by the two following  results
\begin{proposition}
Let $p\geq2$ and $n=2^p-1$ then there exists a perfect code in $\Lambda_n(1^n)$ of order $|C|=\frac{2^n}{n+1}$.
\end{proposition}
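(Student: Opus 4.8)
The plan is to reduce the statement to a fact about the ordinary hypercube. First I would unravel the definition of $\Lambda_n(1^n)$: a circulation of a length-$n$ string has length exactly $n$, so it contains $1^n$ as a substring if and only if it \emph{equals} $1^n$, and this happens for some circulation of $\bm{b}$ precisely when $\bm{b}=1^n$. Hence $\Lambda_n(1^n)$ is nothing but $Q_n$ with the single vertex $1^n$ deleted.

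Next I would establish the elementary reduction: if $D$ is a perfect code of $Q_n$ with $1^n\notin D$, then $D$ is a perfect code of $Q_n-\{1^n\}=\Lambda_n(1^n)$. Indeed $D\subseteq V(\Lambda_n(1^n))$, and for every vertex $\bm{x}\neq 1^n$ the closed neighbourhood taken in $\Lambda_n(1^n)$ is $N_{Q_n}[\bm{x}]\setminus\{1^n\}$; since $1^n\notin D$, this set meets $D$ in exactly the same single vertex as $N_{Q_n}[\bm{x}]$ does, so $D$ is simultaneously a code and a dominating set of $\Lambda_n(1^n)$. Because $D$ is a perfect code of the $n$-regular graph $Q_n$ on $2^n$ vertices, $|D|=2^n/(n+1)$, which is the asserted order.

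It then remains to exhibit a perfect code of $Q_n$ that avoids $1^n$, for $n=2^p-1$, $p\geq 2$. Here I would start from the Hamming code $H$ of length $n$: it is a linear perfect code of $Q_n$ of size $2^n/(n+1)$, has minimum distance $3$, and contains $1^n$ (as recalled in the introduction). Since translation by a fixed vector is an automorphism of $Q_n$, every coset $D=H+\bm{v}$ is again a perfect code. I would take $\bm{v}=10^{n-1}$ and verify $1^n\notin H+\bm{v}$: otherwise $1^n+\bm{v}=01^{n-1}\in H$, and then $\bm{v}=1^n+01^{n-1}\in H$ by linearity together with $1^n\in H$ --- impossible, since $\bm{v}$ has weight $1<3$. (More generally any $\bm{v}\notin 1^n+H$ works, and such a vector exists because $|1^n+H|=2^n/(n+1)<2^n$.) Combining this with the previous paragraph yields the desired perfect code of $\Lambda_n(1^n)$ of order $2^n/(n+1)$.

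I do not expect a genuine obstacle: the only points needing care are the faithful translation of the ``circulation contains $1^s$'' condition in the special case $s=n$, where it collapses to excluding the single string $1^n$, and the check that deleting one non-codeword vertex from $Q_n$ leaves the perfect-code property intact; once $1^n\in H$ is in hand, producing a perfect code missing $1^n$ is a one-line coset argument.
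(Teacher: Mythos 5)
Your proposal is correct and follows essentially the same route as the paper: translate the Hamming code by a weight-one vector (the paper uses $0^{n-1}1$, you use $10^{n-1}$) to get a perfect code of $Q_n$ avoiding $1^n$, then observe that $\Lambda_n(1^n)$ is just $Q_n$ with the single vertex $1^n$ removed, so the code remains perfect there. Your version merely spells out the coset and deletion steps in more detail than the paper does.
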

\begin{proof} Let $D$ be a Hamming code of length $n$ and $C=\{d+(0^{n-1}1);d\in D\}$. Since $1^n\in D$ $C$ is a perfect code of $Q_n$ such that $1^n\notin C$. Since $\Lambda_n(1^n)$ is obtained from $Q_n$ by the deletion of $1^n$ every vertex of  $\Lambda_n(1^n)$ is in the closed neighbourhood of exactly one vertex of $C$.
\end{proof}
\begin{proposition}
Let $p\geq2$ and $n=2^p-1$ then there exists a perfect code in $\Lambda_n(1^{n-1})$ and in $\Lambda_n(1^{n-2})$ of order $|C|=\frac{2^n}{n+1}-1$.
\end{proposition}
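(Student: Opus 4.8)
The plan is to adapt the construction of the previous proposition. Let $D$ be a Hamming code of length $n$; it is a perfect code of $Q_n$ and $1^n\in D$. Put $C=D\setminus\{1^n\}$, and I claim this same $C$ works for both values of $s$. The first step is to describe exactly which vertices of $Q_n$ are removed to form $\Lambda_n(1^s)$ when $s=n-1$ and when $s=n-2$. A binary string of length $n$ has a circulation containing $1^s$ as a substring if and only if, viewed cyclically, it has a run of at least $s$ consecutive $1$'s. For $s=n-1$ this forces weight $\ge n-1$, so the removed set is $\{1^n\}$ together with the $n$ strings of weight $n-1$, that is, exactly $N_{Q_n}[1^n]$. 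For $s=n-2$ one removes in addition exactly the $n$ strings of weight $n-2$ in which the two $0$'s are cyclically adjacent (if the two $0$'s are not adjacent, the two cyclic runs of $1$'s have lengths summing to $n-2$ with both positive, hence each at most $n-3$). In either case every removed vertex has weight at least $n-2$.

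Next I would record the weight bound that makes everything fit: since $1^n\in D$ and $D$ has minimum distance $3$, every $c\in C$ satisfies $d(c,1^n)\ge 3$, i.e. $\mathrm{wt}(c)\le n-3$; hence no vertex of $C$ is removed (removed vertices have weight $\ge n-2$), and every $Q_n$-neighbour of a vertex of $C$ has weight at most $n-2$. Now I would verify that $C$ is a perfect code of $\Lambda_n(1^s)$. It is a code because distances only grow when passing to an induced subgraph, so $d_{\Lambda_n(1^s)}(c,c')\ge d_{Q_n}(c,c')\ge 3$. For efficient domination, let $v$ be any vertex of $\Lambda_n(1^s)$; then $\mathrm{wt}(v)\le n-2$, so $v\notin N_{Q_n}[1^n]$, so the unique codeword $c^\ast\in D$ with $v\in N_{Q_n}[c^\ast]$ is not $1^n$; thus $c^\ast\in C$, and by the weight bound $c^\ast$ survives in $\Lambda_n(1^s)$, while the edge $vc^\ast$ (when $v\neq c^\ast$) survives because we work in an induced subgraph. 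Uniqueness of $c^\ast$ transfers from $Q_n$. Hence every vertex of $\Lambda_n(1^s)$ lies in the closed neighbourhood of exactly one vertex of $C$, and $|C|=|D|-1=\frac{2^n}{n+1}-1$.

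The only genuine obstacle is the combinatorial first step, namely pinning down the removed vertex sets for $s=n-1$ and $s=n-2$; the remainder is essentially a transcription of the argument used for $\Lambda_n(1^n)$. One point worth stressing is that $\Lambda_n(1^s)$ is no longer regular, so the value $\frac{2^n}{n+1}-1$ is forced by efficiency of the domination rather than by a degree count; as a consistency check, the number of edges from $C$ to removed vertices is $0$ when $s=n-1$ and $n$ when $s=n-2$ (each removed weight-$(n-2)$ vertex is adjacent to exactly one codeword of $D$, which necessarily lies in $C$), and this matches $|C|(n+1)-|V(\Lambda_n(1^s))|$.
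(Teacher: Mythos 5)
Your proof is correct and follows essentially the same route as the paper: take a Hamming code $D$ containing $1^n$, set $C=D\setminus\{1^n\}$, and use the minimum distance $3$ to see that all codewords of $C$ have weight at most $n-3$ and hence survive, while every surviving vertex keeps its unique dominating codeword from $Q_n$. Your explicit description of the deleted vertex sets for $s=n-1$ and $s=n-2$ and the edge-counting consistency check are just a more detailed writing of the same argument.
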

\begin{proof} Let $D$ be a Hamming code of length $n$. Then $D$ is a perfect code of $Q_n$ such that $1^n\in D$. 
Since $\Lambda_n(1^{n-1})$ is obtained from $Q_n$ by the deletion of the closed neighbourhood of $1^n$ every vertex of  $\Lambda_n(1^{n-1})$ is in the closed neighbourhood of exactly one vertex of $C=D-\{1^n\}$.
 Furthermore since $1^n\in D$ there is no vertex of weight $n-2$ in $D$. Let $u$ be a vertex of  $\Lambda_n(1^{n-2})$ and $f(u)$ be the vertex in $D$ such that $u\in N_{Q^n}[u]$. Since there is no vertex in $D$ with weight $n-1$ or $n-2$ there is no circulation of $f(u)$ containing $1^{n-2}$ as a substring. 
Therefore $f(u)$ is a vertex of $\Lambda_n(1^{n-2})$ and $u\in N_{\Lambda_n(1^{n-2})}[f(u)]$.
Since a code in $Q_n$ is a code in each of its subgraph $C$ is a perfect code of $\Lambda_n(1^{n-2})$.
\end{proof}\textit{}
\bibliographystyle{plain}
\bibliography{PerfectcodesinLucascubes}

\end{document}